\newtheorem{theorem}{Theorem}
\newtheorem{definition}[theorem]{Definition}
\newtheorem{example}[theorem]{Example}
\newtheorem{proposition}[theorem]{Proposition}
\newenvironment{proof}[1][Proof]{\textbf{#1.} }{\ \rule{0.5em}{0.5em}}
\begin{document}

\title{ON THE CHOLESKY METHOD }
\author{Christian Rakotonirina
\\Institut Sup\'{e}rieur de Technologie d'Antananarivo, IST-T, BP 8122, Madagascar\\
e-mail: rakotopierre@refer.mg}
\maketitle

\begin{abstract}
In this paper, we prove that if the matrix of the linear system is symmetric, the Cholesky decomposition can be obtained from  the Gauss elimination method without pivoting, without proving that the matrix of the system is positive definite. \end{abstract}
\textit{\textbf{MSC}}: Primary 15A06

\noindent \textit{\textbf{Keywords}}: Linear system, Gauss elimination, Cholesky decomposition

\section{Introduction}
\noindent The direct methods for solving linear system , Gauss elimination method, LU decomposition and Cholesky method are well known. We agreed with some authors \cite{DemidovitchMaron79},\cite{Nougier87} that the LU decomposition and the Cholesky method are helpful for solving many linear systems of the same matrix, whose difference is only the constants at the right hand side.

The Gauss elimination method with or without pivoting can lead us to the LU decomposition. The Gauss elimination method with pivoting applied to linear system with symmetric matrix can't lead us to the Cholesky decomposition because pivoting alters the symmetry. However, sometimes we don't need pivoting.

Since the Gauss elimination can lead us to the LU decomposition, we think that it is better to solve at first one of these linear systems by the Gauss elimination and the others by LU decomposition. So, we will be allowed to solve at first one of these linear systems by the Gauss eliminaton and the others by Cholesky method, in the case where the matrix is symmetric positive definite, if Gauss elimination can lead us to the Cholesky method.

In this paper, we will see that there exist relation between Gauss elimination without pivoting and the Cholesky method. That is Gauss elimination without pivoting can lead us to Cholesky decomposition. We will prove this relation by using the LU decomposition.

We organize the paper as the following. In the second section we represent the LU decomposition by using the Gauss elimination method. In the third section we will prove,  with the help of the LU decomposition we can have the Cholesky decomposition from the Gauss elimination method. At the last section an example will be tread for clarify the method.

\section{Gauss elimination}

\noindent Let us consider the following system of linear n equations and n unknowns\\

$\left\{\begin{array}{clrrrrrr} %
a_{11}x_1+a_{12}x_2+\ldots+a_{1n}x_n & =  b_1\\
a_{21}x_1+a_{22}x_2+\ldots+a_{2n}x_n & =  b_2\\
\ldots\ldots\ldots\ldots\ldots\ldots\ldots\ldots\ldots\\
a_{i1}x_1+a_{i2}x_2+\ldots+a_{in}x_n & =  b_i\\
\ldots\ldots\ldots\ldots\ldots\ldots\ldots\ldots\ldots\\
a_{n1}x_1+a_{n2}x_2+\ldots+a_{nn}x_n & =  b_n
\end{array}\right.$\\
\noindent which can be written under matricial form
\begin{equation}\label{eqn1}
AX=B
\end{equation}\\
where\\
$A=\left(a_{ij}\right)_{1\leq i,j\leq n}$,
$X=\begin{pmatrix}
x_1\\
	x_2\\
	\vdots\\
	x_n
\end{pmatrix}$,
$B=\begin{pmatrix}
	b_1\\
	b_2\\
	\vdots\\
	b_n
\end{pmatrix}$
\\
\noindent If $a_{11}\neq0$, Gauss elimination in the first column is written\\

$\left\{\begin{array}{clrrrrrr} %
a_{11}^{(0)}x_1 & + & a_{12}^{(0)}x_2 & + & \ldots & + & a_{1n}^{(0)}x_n & =  b_{1}^{(0)}\\
\ & \ & a_{22}^{(1)}x_2 & + & \ldots & + & a_{2n}^{(1)}x_n & =  b_{2}^{(1)}\\
\ & \  & \ldots & \ldots & \ldots & \ldots & \ldots & \ldots  \ldots\\
\ & \ & a_{i2}^{(1)}x_2 & + & \ldots & + & a_{in}^{(1)}x_n & =  b_{i}^{(1)}\\
\ & \ & \ldots & \ldots & \ldots & \ldots & \ldots & \ldots  \ldots\\
\ & \ & a_{n2}^{(1)}x_2 & + & \ldots & + & a_{nn}^{(1)}x_n & =  b_{n}^{(1)}
\end{array}\right.$

\noindent with $a_{1j}^{(0)}=a_{1j}$ and $a_{ij}^{(1)}=-\frac{a_{i1}a_{1j}-a_{11}a_{ij}}{a_{11}}$,\\

 \noindent which may matricially be written
 \begin{equation}\label{eqn2}
A_1X=B_1
\end{equation}\\
and can be obtained in multiplying (\ref{eqn1}) by the lower triangular matrix\cite{KincaidCheney99}, \cite{Steven04} \\
\begin{equation}\nonumber
G_1=\left(\begin{array}{clrrrrr} %
	1 & 0 & 0  & \ldots & 0\\
	\  & \  & \   & \  & \ \\
	-\frac{a_{21}^{(0)}}{a_{11}^{(0)}} & 1 & 0 & \ldots & 0\\
	-\frac{a_{31}^{(0)}}{a_{11}^{(0)}} & 0 & 1 & \ldots & 0\\
	\vdots & \vdots & \vdots & \ddots & \vdots\\
-\frac{a_{n1}^{(0)}}{a_{11}^{(0)}} & 0 & 0 & \ldots & 1	
\end{array}\right)
\end{equation}

 $A_1=G_1A$ and $B_1=G_1B$.\\

 The Gauss elimination at the second column can be obtained by multiplying to the relation (\ref{eqn2}) the lower triangular matrix\\
\begin{equation}\nonumber
 G_2=\left(\begin{array}{clrrrrrr} %
	1 & 0 & 0  & 0 & \ldots & 0\\
	0 & 1 & 0  & 0 & \ldots & 0\\
	\  & \  & \   & \  & \  & \ \\
	  0 & -\frac{a_{32}^{(1)}}{a_{22}^{(1)}} & 1 & 0 & \ldots & 0\\
0 & -\frac{a_{42}^{(1)}}{a_{22}^{(1)}}& 0 & 1 & \ldots & 0\\
	\vdots & \vdots & \vdots & \vdots & \ddots & \vdots\\
0 & -\frac{a_{n2}^{(1)}}{a_{22}^{(1)}} & 0 & 0 & \ldots & 1	
\end{array}\right)
\end{equation}\\

The matricial equation (\ref{eqn2}) becomes
\begin{equation}\nonumber
A_2X=B_2
\end{equation}\\
with $A_2=G_2G_1A$ and $B_2=G_2G_1B$.\\

In continuing so, we have finally,
\begin{equation}\label{eqn3}
UX=B'
\end{equation}\\
with $U=G_{n-1}\ldots G_2G_1A$ an upper triangular matrix and $B'=G_{n-1}\ldots G_2G_1B$.\\
We can check easily that the inverse of $G_l$ is \\
\begin{equation}\nonumber
G^{-1}_l=\left(\begin{array}{clrrrrrrr} %
	1 & \ & 0 & \ldots   & 0 &  \ & \ldots &  \ & 0\\
	\  & \ & \  & \   & \  & \  & \ & \ & \ \\	
	0 & \ & 1 & \ldots   & 0 & \ & \ldots & \ & 0\\
	\  & \ & \  & \   & \  & \  & \ & \ & \ \\	
		\vdots & \ & \vdots & \ddots  & \vdots & \  & \ & \ & \vdots\\
		\  & \ & \  & \   & \  & \  & \ & \ & \ \\		
	  0 & \ & 0 & \ldots & 1 & \ & \ldots & \ & 0\\
	  \  & \ & \  & \   & \  & \  & \ & \ & \ \\	
0 & \ & 0 & \ldots & \frac{a_{(l+1)l}^{(l-1)}}{a_{ll}^{(l-1)}} & \ & \ldots & \ & 0\\
	\vdots & \ & \vdots & \  & \vdots & \ & \ddots & \ & \vdots\\
0 & \ & 0 & \ldots & \frac{a_{nl}^{(l-1)}}{a_{ll}^{(l-1)}} & \ & \ldots & \ & 1\\
\end{array}\right)
\end{equation}
So, $G_{n-1}G_{n-2}\cdots G_2G_1$ is an invertible matrix and the equation (\ref{eqn3}) becomes
\begin{equation}\nonumber
LUX=B
\end{equation}\\
\noindent with $L=G^{-1}_1G^{-1}_2\cdots G^{-1}_{n-2}G^{-1}_{n-1}$ and we have the decomposition of $A$ as product of lower triangular matrix by an upper triangular matrix
\begin{equation}\nonumber
A=LU
\end{equation}\\
One can check easily that
\begin{equation}\nonumber
L=\left(\begin{array}{clrrrrrrr} %
	1 & \ & 0 & \ldots   & 0 &  \ & \ldots &  \ & 0\\
	\  & \ & \  & \   & \  & \  & \ & \ & \ \\	
	\frac{a_{21}^{(0)}}{a_{11}^{(0)}} & \ & 1 & \ldots   & 0 & \ & \ldots & \ & 0\\
	\  & \ & \  & \   & \  & \  & \ & \ & \ \\	
		\frac{a_{31}^{(0)}}{a_{11}^{(0)}} & \ & \frac{a_{32}^{(1)}}{a_{22}^{(1)}} & \ddots  & \vdots & \  & \ & \ & \vdots\\
		\  & \ & \  & \   & \  & \  & \ & \ & \ \\		
	  \vdots & \ & \vdots & \ldots & 1 & \ & \ldots & \ & 0\\
	  \  & \ & \  & \   & \  & \  & \ & \ & \ \\	
\frac{a_{(l+1)1}^{(0)}}{a_{11}^{(0)}} & \ & \frac{a_{(l+1)2}^{(1)}}{a_{22}^{(1)}} & \ldots & \frac{a_{(l+1)l}^{(l-1)}}{a_{ll}^{(l-1)}} & \ & \ddots & \ & 0\\
	\vdots & \ & \vdots & \  & \vdots & \ & \  & \ & \vdots\\
\frac{a_{n1}^{(0)}}{a_{11}^{(0)}} & \ & \frac{a_{(n)2}^{(1)}}{a_{22}^{(1)}} & \ldots & \frac{a_{nl}^{(l-1)}}{a_{ll}^{(l-1)}} & \ & \ldots & \ & 1\\
\end{array}\right)
\end{equation}\\
\section{Cholesky method}
Now suppose $A$ is a symmetric positive definite matrix. Then, the Cholesky method consists to decompose $A$ as the product
\begin{equation}\nonumber
A=G^TG
\end{equation}\\
with $G$ is a upper triangular matrix and $G^T$ his transpose.

Let us at first generalize this decomposition.
\begin{definition}
Let $A=\left(a_{ij}\right)_{1\leq i,j\leq n}$ a complex symmetric matrix. Let us call Gauss matrix of $A$ the upper triangular matrix $U(A)$, obtained after transforming $A$ by the Gauss elimination above.
\end{definition}
\begin{proposition}\label{prop}
Let $A=\left(a_{ij}\right)_{1\leq i,j\leq n}$ a complex symmetric matrix, such that $det(A)\neq 0$,
\begin{equation}\nonumber
U(A)=\left(\begin{array}{clrrrrr} %
	u_{11} & u_{12} & \ldots & u_{1n}\\
	
	0 & u_{22} & \ldots & u_{2n}\\
	
	\vdots & \vdots & \ddots &  \vdots\\
0 & 0 & \ldots & u_{nn}	
\end{array}\right)
\end{equation}\\
the Gauss matrix of $A$. Then $A$ can be decomposed as the product $A=G^TG$ with
 \begin{equation}\nonumber
G=\left(\begin{array}{clrrrrr} %
	\frac{u_{11}}{\sqrt{u_{11}}} & \frac{u_{12}}{\sqrt{u_{11}}} & \ldots & \frac{u_{1n}}{\sqrt{u_{11}}}\\
	
	0 & \frac{u_{22}}{\sqrt{u_{22}}} & \ldots & \frac{u_{2n}}{\sqrt{u_{22}}}\\
	
	\vdots & \vdots & \ddots &  \vdots\\
0 & 0 & \ldots & \frac{u_{nn}}{\sqrt{u_{nn}}}	
\end{array}\right)
\end{equation}\\
with $\sqrt{u_{ii}}$ a root squared of the complex number $u_{ii}$.
\end{proposition}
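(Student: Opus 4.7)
The plan is to leverage the LU decomposition $A=LU$ established in Section 2, and to show that for symmetric $A$ the factor $L$ is essentially the transpose of $U$ up to diagonal rescaling. Concretely, I would factor $U = D\,U'$, where $D = \mathrm{diag}(u_{11},\ldots,u_{nn})$ and $U'$ is upper triangular with unit diagonal, so that $(U')_{ij} = u_{ij}/u_{ii}$ for $j\geq i$. The goal then reduces to proving the identity $L = (U')^{T}$, for once that is in hand we may write
\[
A = LU = (U')^{T} D\,U' = (D^{1/2}U')^{T}(D^{1/2}U') = G^{T}G,
\]
and $G := D^{1/2}U'$ has exactly the entries $u_{ij}/\sqrt{u_{ii}}$ claimed in the statement.

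To establish $L = (U')^{T}$, it suffices to verify that for each $i>j$ the off-diagonal entry of $L$, which by Section 2 equals $a_{ij}^{(j-1)}/a_{jj}^{(j-1)}$, agrees with $u_{ji}/u_{jj} = a_{ji}^{(j-1)}/a_{jj}^{(j-1)}$. This boils down to the single assertion
\[
a_{ij}^{(j-1)} = a_{ji}^{(j-1)} \quad \text{for all } i>j,
\]
i.e.\ that the Gauss elimination preserves symmetry of the trailing submatrix. I would prove by induction on $k$ that $\bigl(a_{ij}^{(k)}\bigr)_{i,j>k}$ is symmetric. The base case $k=0$ is the hypothesis on $A$, and the inductive step follows immediately from the update formula
\[
a_{ij}^{(k)} = a_{ij}^{(k-1)} - \frac{a_{ik}^{(k-1)}}{a_{kk}^{(k-1)}}\,a_{kj}^{(k-1)},
\]
which is manifestly symmetric in the indices $(i,j)$ once the entries on the right-hand side are known to be symmetric.

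Finally, the hypothesis $\det(A)\neq 0$ (together with the tacit requirement that $U(A)$ is well-defined, so that every pivot $u_{ii}$ is nonzero) lets me pick a complex square root $\sqrt{u_{ii}}$ for each $i$ and form $D^{1/2}$, completing the factorization $A = G^{T}G$. The main obstacle is essentially indexing hygiene: one must make sure the entry of $L$ in position $(i,j)$ really is being compared with the entry of $U'$ in the transposed position $(j,i)$, and that the pivot used to produce $\ell_{ij}$ is the same $a_{jj}^{(j-1)}=u_{jj}$ that normalizes the $j$-th row of $U$. The symmetry-preservation induction itself is routine once this bookkeeping is set up.
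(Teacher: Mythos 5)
Your proof is correct and follows essentially the same route as the paper: both rewrite $A=LU$ as (lower unit triangular)$\times$(diagonal)$\times$(unit upper triangular) --- the paper writes $A=L(A)D^{-1}DU(A)$ with $D=\mathrm{diag}(1/\sqrt{u_{ii}})$ and sets $G=DU(A)$ --- and then identify the lower factor with the transpose of the upper one using the symmetry of $A$. The only difference is that you actually establish the key identity $a_{ij}^{(j-1)}=a_{ji}^{(j-1)}$ by induction on the elimination steps, whereas the paper simply asserts ``Since $A$ symmetric, hence $L(A)D^{-1}=G^{T}$,'' so your write-up supplies the one step the paper leaves implicit.
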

\begin{proof}
\begin{equation}\nonumber
U(A)=\left(\begin{array}{clrrrrr} %
	a^{(0)}_{11} & a^{(0)}_{12} & \ldots & a^{(0)}_{1n}\\
	
	0 & a^{(1)}_{22} & \ldots & a^{(1)}_{2n}\\
	
	\vdots & \vdots & \ddots &  \vdots\\
0 & 0 & \ldots & a^{(n-1)}_{nn}	
\end{array}\right)
\end{equation}\\
\begin{equation}\nonumber
L(A)=\left(\begin{array}{clrrrrrrr} %
	1 & \ & 0 & \ldots   & 0 &  \ & \ldots &  \ & 0\\
	\  & \ & \  & \   & \  & \  & \ & \ & \ \\	
	\frac{a_{21}^{(0)}}{a_{11}^{(0)}} & \ & 1 & \ldots   & 0 & \ & \ldots & \ & 0\\
	\  & \ & \  & \   & \  & \  & \ & \ & \ \\	
		\frac{a_{31}^{(0)}}{a_{11}^{(0)}} & \ & \frac{a_{32}^{(1)}}{a_{22}^{(1)}} & \ddots  & \vdots & \  & \ & \ & \vdots\\
		\  & \ & \  & \   & \  & \  & \ & \ & \ \\		
	  \vdots & \ & \vdots & \ldots & 1 & \ & \ldots & \ & 0\\
	  \  & \ & \  & \   & \  & \  & \ & \ & \ \\	
\frac{a_{(l+1)1}^{(0)}}{a_{11}^{(0)}} & \ & \frac{a_{(l+1)2}^{(1)}}{a_{22}^{(1)}} & \ldots & \frac{a_{(l+1)l}^{(l-1)}}{a_{ll}^{(l-1)}} & \ & \ddots & \ & 0\\
	\vdots & \ & \vdots & \  & \vdots & \ & \  & \ & \vdots\\
\frac{a_{n1}^{(0)}}{a_{11}^{(0)}} & \ & \frac{a_{(n)2}^{(1)}}{a_{22}^{(1)}} & \ldots & \frac{a_{nl}^{(l-1)}}{a_{ll}^{(l-1)}} & \ & \ldots & \ & 1\\
\end{array}\right)
\end{equation}\\
\begin{equation}\nonumber
A=L(A)D^{-1}DU(A)
\end{equation}\\
with
\begin{equation}\nonumber
D=\left(\begin{array}{clrrrrr} %
	\frac{1}{\sqrt{a^{(0)}_{11}}} & 0 & \ldots & 0\\
	
	0 & \frac{1}{\sqrt{a^{(1)}_{22}}} & \ldots & 0\\
	
	\vdots & \vdots & \ddots &  \vdots\\
0 & 0 & \ldots & \frac{1}{\sqrt{a^{(n-1)}_{nn}}}	
\end{array}\right)
\end{equation}\\
Let
$G=DU(A)$. Since $A$ symmetric, hence $L(A)D^{-1}=G^T$.

\end{proof}
\begin{example}
Consider the following two systems of linear equations\\

\begin{equation}\nonumber
\left\{\begin{array}{clrrrrrr} %
x_1  & - & x_2 & \ & \ & + & x_4 & =  3\\
-x_1 & + & 5x_2 & + & 2x_3 & - & 3x_4 & =  -5\\
\ & \ & 2x_2 & + & 5x_2 & + & x_4 & =  -7\\
x_1 & - & 3x_2 & + & x_3 & + & 4x_4 & =  2
\end{array}\right.
\end{equation}

\begin{equation}\nonumber
\left\{\begin{array}{clrrrrrr} %
x_1  & - & x_2 & \ & \ & + & x_4 & =  3\\
-x_1 & + & 5x_2 & + & 2x_3 & - & 3x_4 & =  1\\
\ & \ & 2x_2 & + & 5x_2 & + & x_4 & =  2\\
x_1 & - & 3x_2 & + & x_3 & + & 4x_4 & =  2
\end{array}\right.
\end{equation}
whose difference is only the right hand side and their matrix is symmetric.
So let us solve the first one by the Gauss elimination method, and the second one by the Cholesky method or LU method.
\begin{equation}\nonumber
\left(\begin{array}{clrrrr} %
1  & 0  & 0 & 0 \\
1 & 1 & 0 & 0 \\
0 & 0 & 1 & 0\\
-1 & 0 & 0 & 1
\end{array}\right)\left(\begin{array}{clrrrr} %
1  & -1  & 0 & 1 \\
-1 & 5 & 2 & -3 \\
0 & 2 & 5 & 1\\
1 & -3 & 1 & 4
\end{array}\right)\left(\begin{array}{clr} %
x_1   \\
x_2  \\
x_3 \\
x_4
\end{array}\right)=\left(\begin{array}{clrrrr} %
1  & 0  & 0 & 0 \\
1 & 1 & 0 & 0 \\
0 & 0 & 1 & 0\\
-1 & 0 & 0 & 1
\end{array}\right)\left(\begin{array}{clr} %
3   \\
-5  \\
-7 \\
2
\end{array}\right)
\end{equation}
\begin{equation}\nonumber
\left(\begin{array}{clrrrr} %
1  & -1  & 0 & 1 \\
0 & 4 & 2 & -2 \\
0 & 2 & 5 & 1\\
0 & -2 & 1 & 3
\end{array}\right)\left(\begin{array}{clr} %
x_1   \\
x_2  \\
x_3 \\
x_4
\end{array}\right)=\left(\begin{array}{clr} %
3   \\
-2  \\
-7 \\
-1
\end{array}\right)
\end{equation}
\begin{equation}\nonumber
\left(\begin{array}{clrrrr} %
1  & 0  & 0 & 0 \\
0 & 1 & 0 & 0 \\
0 & -\frac{1}{2} & 1 & 0\\
0 & \frac{1}{2} & 0 & 1
\end{array}\right)\left(\begin{array}{clrrrr} %
1  & -1  & 0 & 1 \\
0 & 4 & 2 & -2 \\
0 & 2 & 5 & 1\\
0 & -2 & 1 & 3
\end{array}\right)\left(\begin{array}{clr} %
x_1   \\
x_2  \\
x_3 \\
x_4
\end{array}\right)=\left(\begin{array}{clrrrr} %
1  & 0  & 0 & 0 \\
0 & 1 & 0 & 0 \\
0 & -\frac{1}{2} & 1 & 0\\
0 & \frac{1}{2} & 0 & 1
\end{array}\right)\left(\begin{array}{clr} %
3   \\
-2  \\
-7 \\
-1
\end{array}\right)
\end{equation}
\begin{equation}\nonumber
\left(\begin{array}{clrrrr} %
1  & -1  & 0 & 1 \\
0 & 4 & 2 & -2 \\
0 & 0 & 4 & 2\\
0 & 0 & 2 & 2
\end{array}\right)\left(\begin{array}{clr} %
x_1   \\
x_2  \\
x_3 \\
x_4
\end{array}\right)=\left(\begin{array}{clr} %
3   \\
-2  \\
-6 \\
-2
\end{array}\right)
\end{equation}
\begin{equation}\nonumber
\left(\begin{array}{clrrrr} %
1  & 0  & 0 & 0 \\
0 & 1 & 0 & 0 \\
0 & 0 & 1 & 0\\
0 & 0 & -\frac{1}{2} & 1
\end{array}\right)\left(\begin{array}{clrrrr} %
1  & -1  & 0 & 1 \\
0 & 4 & 2 & -2 \\
0 & 0 & 4 & 2\\
0 & 0 & 2 & 2
\end{array}\right)\left(\begin{array}{clr} %
x_1   \\
x_2  \\
x_3 \\
x_4
\end{array}\right)=\left(\begin{array}{clrrrr} %
1  & 0  & 0 & 0 \\
0 & 1 & 0 & 0 \\
0 & 0 & 1 & 0\\
0 & 0 & -\frac{1}{2} & 1
\end{array}\right)\left(\begin{array}{clr} %
3   \\
-2  \\
-6 \\
-2
\end{array}\right)
\end{equation}
\begin{equation}\nonumber
\left(\begin{array}{clrrrr} %
1  & -1  & 0 & 1 \\
0 & 4 & 2 & -2 \\
0 & 0 & 4 & 2\\
0 & 0 & 0 & 1
\end{array}\right)\left(\begin{array}{clr} %
x_1   \\
x_2  \\
x_3 \\
x_4
\end{array}\right)=\left(\begin{array}{clr} %
3   \\
-2  \\
-6 \\
1
\end{array}\right)
\end{equation}
$x_4=1$, $4x_3+2=-6$, $x_3=-2$, $4x_2-4-2=-2$, $x_2=1$, $x_1-1+1=3$, $x_1=3$

Now, let us move to the second equation, which can be written, after the Proposition~\ref{prop}, $G^TGX=B$ and solved in writing $\left\{\begin{array}{clrr} %
G^TY &=B \\
GX &=Y
\end{array}\right.$

\begin{equation}\nonumber
\left(\begin{array}{clrrrr} %
1  & 0  & 0 & 0 \\
-1 & 2 & 0 & 0 \\
0 & 1 & 2 & 0\\
1 & -1 & 1 & 1
\end{array}\right)\left(\begin{array}{clr} %
y_1   \\
y_2  \\
y_3 \\
y_4
\end{array}\right)=\left(\begin{array}{clr} %
3   \\
1  \\
2 \\
2
\end{array}\right)
\end{equation}
$y_1=3$, $3+2y_2=1$, $y_2=2$, $2+2y_3=2$, $y_3=0$, $3-2+y_4=2$, $y_4=1$
\begin{equation}\nonumber
\left(\begin{array}{clrrrr} %
1  & -1  & 0 & 1 \\
0 & 2 & 1 & -1 \\
0 & 0 & 2 & 1\\
0 & 0 & 0 & 1
\end{array}\right)\left(\begin{array}{clr} %
x_1   \\
x_2  \\
x_3 \\
x_4
\end{array}\right)=\left(\begin{array}{clr} %
3   \\
2  \\
0 \\
1
\end{array}\right)
\end{equation}
$x_4=1$, $2x_3+1=0$, $x_3=-\frac{1}{2}$, $2x_2-\frac{1}{2}-1=2$, $x_2=\frac{7}{4}$, $x_1-\frac{7}{4}+1=3$, $x_1=\frac{15}{4}$
\end{example}

\end{document}